\author{Cinzia Bisi,\, Francesco Polizzi}
\title{Proper polynomial self-maps of  the affine space: state of the art and new results}
\date{}
\newtheorem{inizio}{Lemma}[section]
\newtheorem{theorem}[inizio]{Theorem}
\newtheorem{corollary}[inizio]{Corollary}
\newtheorem{proposition}[inizio]{Proposition}
\newtheorem{remark}[inizio]{Remark}
\newtheorem{definition}[inizio]{Definition}
\newtheorem{o-problem}[inizio]{Open Problem}
\newtheorem*{teo-L}{Theorem}
\newtheorem*{teoA}{Theorem A}
\newtheorem*{teoB}{Theorem B}
\newtheorem*{teoB1}{Theorem B1}
\newtheorem*{teoC}{Theorem C}
\newtheorem*{teoD}{Theorem D}
\newtheorem*{corollary-s}{Corollary}
\theoremstyle{definition}
\newcommand{\lr}{\longrightarrow}
\newcommand{\mC}{\mathbb{C}}
\begin{document}


\maketitle


\abstract Two proper polynomial maps $f_1, \,f_2 \colon \mC^n \lr
\mC^n$ are said to be \emph{equivalent} if there exist $\Phi_1,\,
\Phi_2 \in \textrm{Aut}(\mC^n)$ such that $f_2=\Phi_2 \circ f_1
\circ \Phi_1$. In this article we investigate proper polynomial
maps of topological degree $d \geq 2$ up to equivalence. In
particular we describe some of our recent results in the case
$n=2$ and we partially extend them in higher dimension.

\endabstract

\footnote{AMS MSC: 14R10 (Primary), 14E05, 20H15 (Secondary)
\newline Key words: proper polynomial maps, complex reflection
groups, Galois coverings.}

\section{Introduction}

The semi-group of proper polynomial self-maps of the affine space
$\mathbb{A}^n$ is a basic object both in complex analysis and
algebraic geometry. It is therefore surprising how little is known
about its structure. Although there has been some progress in the
last few years, many
basic questions remain unanswered. \\
Two proper polynomial maps $f_1, \,f_2 \colon \mC^n \lr \mC^n$ are
said to be \emph{equivalent} if there exist $\Phi_1,\, \Phi_2 \in
\textrm{Aut}(\mC^n)$ such that $f_2=\Phi_2 \circ f_1 \circ
\Phi_1$. In this article we investigate proper polynomial maps of
topological degree $d \geq 2$ up to equivalence.

In Section \ref{sec:intro} we set up notation and terminology and
we state without proof some preliminary results. For further details,
we refer the reader to \cite{BP10}. \\
In Section \ref{sec:n=2} we explain our recent work in dimension
$n=2$. In \cite{Lam05} Lamy proved that any proper polynomial map
$f \colon \mC^2 \lr \mC^2$ of topological degree $2$ is equivalent
to the map $(x, \, y) \lr (x, \, y^2)$; in other words, if $d=2$
there is just one equivalence class. When $d \geq 3$ we show that
the situation is entirely different, since there are always
infinitely many equivalence classes (see Theorems A, B and B1).
Theorems A and B already appeared in our paper \cite{BP10},
whereas Theorem B1 is new. Moreover, by using Shephard-Todd's
classification of finite complex reflection groups (\cite{ST54}),
we also obtained a complete description of \emph{Galois coverings}
$f \colon \mC^2 \lr \mC^2$ up to equivalence (Theorem C).
\\ Finally, in Section \ref{sec:n=3} we give an account on the
situation in dimension $n \geq 3$ and we partially extend some of
our theorems in this setting. For instance, we prove that for $d
\geq 3$ there are still infinitely many  equivalence classes
(Theorem D). It would be certainly desirable to extend Theorem C
in higher dimension, by describing all finite Galois covers $f
\colon \mathbb{C}^n \lr \mathbb{C}^n$ up to equivalence. The main
difficulty in carrying out this project is that the linearization
theorem proven in \cite{Ka79} for $n=2$ cannot be generalized in
dimension $n \geq 3$ (see \cite{Sch89}, \cite{Kn91},
\cite{MasPet91}, \cite{MasMosPet91} for some counterexamples), so
the classification method of \cite{BP10} in this case breaks down.
Although this problem is at present far from being solved, we can
nevertheless give some partial results (see Theorem \ref{n=3-1},
Theorem \ref{n=3-2} and Remark \ref{rem}).

\bigskip \noindent
$\mathbf{Acknowledgements.}$ C. Bisi was partially supported by
Progetto MIUR di Rilevante Interesse Nazionale {\it Propriet{\`a}
geometriche
delle variet{\`a} reali e complesse} and by GNSAGA - INDAM. \\
F. Polizzi was partially supported by the World Class University
program through the National Research Foundation of Korea funded
by the Ministry of Education, Science and Technology
(R33-2008-000-10101-0). He wishes to thank the Department of
Mathematics of Sogang University (Seoul, South Korea)
and especially Yongnam Lee for the invitation and the warm hospitality. \\
Finally, F. Polizzi is indebted to J. Martin-Morales for
suggesting the use of the polynomials $Q_{d, \, \lambda}$ in the
proof of Theorem B1.

\section{Proper polynomial maps} \label{sec:intro}

\begin{definition}
Let $f \colon \mathbb{C}^n \lr \mathbb{C}^n$ be a dominant
polynomial map. We say that $f$ is \emph{proper} if it is closed and
for every point $p \in \mathbb{C}^n$ the set $f^{-1}(p)$ is compact.
Equivalently, $f$ is proper if and only if
for every compact set $K \subset \mathbb{C}^n$ the set $f^{-1}(K)$ is compact.
\end{definition}
Every proper map is necessarily surjective; the converse is not
true, for instance $(x, \, y) \lr (x+x^2y, \, y)$ provides an
example of surjective self-map of $\mathbb{C}^2$ which is not
proper. There is a purely algebraic condition for a polynomial map
to be proper, see \cite[Proposition 3]{Jel93}:
\begin{proposition} \label{prop:Jel}
A dominant polynomial map  $f \colon \mathbb{C}^n \lr \mathbb{C}^n$
is proper if and only if the push-forward map $f_{\ast} \colon
\mathbb{C}[s_1, \ldots, s_n] \lr \mathbb{C}[x_1, \ldots, x_n]$ is
finite, i.e., $f_{\ast}\mathbb{C}[s_1, \ldots, s_n] \subset
\mathbb{C}[x_1, \ldots, x_n]$ is an integral extension of rings.
\end{proposition}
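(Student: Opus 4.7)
My plan is to handle the two implications separately.

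The reverse implication ($\Leftarrow$) is direct. Assume each coordinate function $x_i$ satisfies a monic relation
\[
x_i^{d_i} + a_{i,1}(f(x))\, x_i^{d_i - 1} + \cdots + a_{i,d_i}(f(x)) = 0
\]
over $f_*\mathbb{C}[s_1,\ldots,s_n]$. For any compact set $K \subset \mathbb{C}^n$, the continuous functions $a_{i,j} \circ f$ are bounded on $f^{-1}(K)$, and the classical Cauchy root bound then forces each $|x_i|$ to be bounded on $f^{-1}(K)$. Closedness of $f^{-1}(K)$ follows from continuity of $f$, so $f^{-1}(K)$ is compact and $f$ is proper.

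For the forward direction ($\Rightarrow$), assume $f$ is proper. The first step is to observe that each fiber $f^{-1}(p)$ is a compact analytic subvariety of $\mathbb{C}^n$, hence finite by the maximum principle applied to the coordinate functions; in particular, $f$ is quasi-finite. Next, I would compactify $f$ by taking $Y$ to be the closure of the graph $\Gamma_f$ inside $\mathbb{P}^n \times \mathbb{P}^n$, and letting $\overline{f}\colon Y \to \mathbb{P}^n$ be the second projection; this is a projective, hence algebraically proper morphism. The key geometric claim is that $\overline{f}^{-1}(\mathbb{C}^n) = \Gamma_f$: any boundary point of $Y$ lying over $\mathbb{C}^n$ would arise as a limit of graph points $(x_k, f(x_k))$ with $|x_k| \to \infty$ but $f(x_k)$ remaining in a compact set, directly contradicting the topological properness of $f$. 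It follows that $f$ itself, viewed as the restriction of $\overline{f}$ to the preimage of the open affine $\mathbb{C}^n \subset \mathbb{P}^n$, is algebraically proper. A proper, quasi-finite morphism between affine varieties is finite, which by the standard dictionary is exactly the statement that $\mathbb{C}[x_1,\ldots,x_n]$ is a finite, hence integral, extension of $f_*\mathbb{C}[s_1,\ldots,s_n]$.

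The main obstacle is the geometric identity $\overline{f}^{-1}(\mathbb{C}^n) = \Gamma_f$, which bridges the topological hypothesis on $f$ with the scheme-theoretic setup required for Zariski's Main Theorem. The remaining ingredients (maximum principle for compact analytic subvarieties, finiteness criterion for proper quasi-finite affine morphisms) are standard, but it is precisely at this step that one must carefully rule out the possibility that some irreducible component of the boundary $Y \setminus \Gamma_f$ projects dominantly to a subvariety meeting $\mathbb{C}^n$. A convergence argument in the analytic topology of $\mathbb{P}^n \times \mathbb{P}^n$ seems to me the cleanest way to carry this out.
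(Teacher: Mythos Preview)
The paper does not prove this proposition at all: it is stated with a bare citation to \cite[Proposition 3]{Jel93}, so there is no argument in the paper to compare yours against.

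Your proof is correct. The reverse implication via the Cauchy root bound is standard. For the forward implication, the compactification argument is sound; the one point you flag as delicate, namely $\overline{f}^{-1}(\mathbb{C}^n)=\Gamma_f$, is justified exactly as you indicate once you note that $\Gamma_f$, being Zariski-open in the irreducible variety $Y$, is also dense in the analytic topology, so every point of $Y\setminus\Gamma_f$ really is an analytic limit of graph points $(x_k,f(x_k))$. The remaining steps (compact analytic subvarieties of $\mathbb{C}^n$ are finite; proper plus quasi-finite between affine varieties of finite type implies finite) are standard, as you say. In short, you have supplied a proof where the paper simply defers to the literature.
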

We recall that if $f \colon \mathbb{C}^n \lr \mathbb{C}^n$ is the
proper polynomial map
\begin{equation*}
f(x_1, \ldots , x_n)= (f_1(x_1, \ldots ,x_n), \ldots , f_n(x_1,
\ldots , x_n)),
\end{equation*}
with $f_1, \ldots , f_n \in \mathbb{C}[x_1, \ldots , x_n],$ then
$f_{\ast}$ is defined as
\begin{equation*}
\begin{split}
f_{\ast} \colon \mathbb{C}[s_1, \ldots , s_n] & \lr \mathbb{C}[x_1, \ldots , x_n]\\
 & s_1 \lr f_1(x_1, \ldots , x_n) \\
 & \vdots \\
 & s_n \lr f_n(x_1, \ldots , x_n).
\end{split}
\end{equation*}
Moreover, if we denote by $J_f$ the determinant
of the Jacobian matrix of $f,$ then the \emph{critical locus}
$\textrm{Crit}(f)$ is defined as the affine hypersurface $V(J_f)$,
and the \emph{branch locus} $B(f)$ is the image of
$\textrm{Crit}(f)$ via $f$. The
 restriction
 \begin{equation*}
f \colon \mathbb{C}^n \setminus f^{-1}(B(f)) \lr \mathbb{C}^n
\setminus B(f)
\end{equation*}
is an unramified covering of finite degree $d$; we will call $d$
the \emph{topological degree} of $f$.

\begin{definition} \label{def:equiv}
We say that two proper polynomial maps $f_1, \,f_2 \colon \mC^n
\lr \mC^n$ are \emph{equivalent} if there exist $\Phi_1,\, \Phi_2
\in \emph{Aut}(\mC^n)$ such that
\begin{equation} \label{eq:equiv}
f_2=\Phi_2 \circ f_1  \circ \Phi_1.
\end{equation}
\end{definition}
If $f_1$ and $f_2$ are equivalent, they have the same topological
degree; moreover, the chain rule implies that $\textrm{Crit}(f_1)$
is biholomorphic to $\textrm{Crit}(f_2)$ and $B(f_1)$ is
biholomorphic to $B(f_2)$. Notice that this equivalence relation
in the semi-group of proper polynomial maps is weaker than the
conjugacy relation, in which we require $\Phi_2=\Phi_1^{-1}$. For
instance, the two maps $f_1(x, \, y)=(x, \, y^2)$ and $f_2(x, \,
y)=(x, \, y^2+x)$ are equivalent in our sense but they are not
conjugate by any automorphism of $\mathbb{C}^2$, since their sets
of fixed points are not biholomorphic. The study of conjugacy
classes of proper maps of given topological degree is certainly an
interesting problem, but we will not consider it here; some good
references are \cite{FJ07a} and \cite{FJ07b}

\section{The case $n=2$} \label{sec:n=2}
In \cite{Lam05} Lamy proved that any proper polynomial map of
topological degree $2$ is equivalent to the map $(x, \, y) \lr (x,
\, y^2)$; in other words, if $d=2$ there is just one equivalence
class. In \cite{BP10} we showed that the situation is entirely
different when $d \geq 3$; in fact, we proved the following two
results:

\begin{teoA} \label{teoA}
For every $d \geq 3$, consider the polynomial map $f_d \colon
\mathbb{C}^2 \lr \mathbb{C}^2$ given by
\begin{equation*}
f_d(x, \, y) :=(x + y + xy, \, x^{d-1}y).
\end{equation*}
Then $f$ is proper of topological degree $d$, and it is \emph{not}
equivalent to any map of the form $(x, \,y) \lr (x, \, Q(x,
\,y))$.
\end{teoA}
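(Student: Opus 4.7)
The plan is to handle the two assertions separately: properness and the value of the topological degree via an elimination argument, and the non-equivalence via an invariant of the branch locus.

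For the first part, I would apply Proposition \ref{prop:Jel}. Setting $s_1 = x+y+xy$ and $s_2 = x^{d-1}y$, the relation $s_2(1+x) = x^{d-1}(s_1-x)$ rearranges to the monic equation
\begin{equation*}
x^d - s_1\, x^{d-1} + s_2\, x + s_2 = 0,
\end{equation*}
exhibiting $x$ as integral over $\mC[s_1,s_2]$; a symmetric elimination handles $y$. Thus $f_{d\ast}$ is finite and $f_d$ is proper. For a generic target point the displayed polynomial has $d$ distinct roots in $x$, each recovering $y$ uniquely via $y = (s_1 - x)/(1+x)$, so the topological degree equals $d$.

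For the non-equivalence I would first rephrase the statement. If $f_d = \Phi_2 \circ g \circ \Phi_1$ with $g(x,y) = (x, Q(x,y))$, then $u := \pi_1 \circ \Phi_2^{-1}$ is a polynomial coordinate on the target and $u \circ f_d = \pi_1 \circ \Phi_1$ is a polynomial coordinate on the source; conversely, any pair of coordinates with this property yields an equivalence of the required form. By the Abhyankar--Moh theorem, a polynomial on $\mC^2$ is a coordinate exactly when its zero set is isomorphic to $\mathbb{A}^1$, so the task reduces to showing that for $d \geq 3$ no $u \in \mC[s_1,s_2]$ with $V(u)\cong\mathbb{A}^1$ satisfies $V(u \circ f_d) \cong \mathbb{A}^1$.

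To extract a usable invariant I would compute $\operatorname{Crit}(f_d) = V(x) \cup V\bigl(x - (d-1)y - (d-2)xy\bigr)$ directly from the Jacobian, and then $B(f_d) = L \cup \Gamma$, where $L = \{s_2 = 0\}$ is the image of $V(x)$ and $\Gamma$ is an irreducible rational curve. A local parametrization near the origin gives $s_1 \sim \frac{d}{d-1}\, t$ and $s_2 \sim \frac{1}{d-1}\, t^d$ along $\Gamma$, hence $\Gamma$ is tangent to $L$ at $(0,0)$ with contact of order exactly $d$ and $(L \cdot \Gamma)_{(0,0)} = d$, which is preserved by any biholomorphism of $\mC^2$. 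One also checks that $f_d^{-1}(L) = V(x) \cup V(y)$ is a pair of transverse lines and that the projective extension of $f_d$ carries two distinct indeterminacy points $[1{:}0{:}0]$ and $[0{:}1{:}0]$ on the line at infinity.

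The main obstacle is then to convert this geometric data into the impossibility of the desired $u$. Arguing by contradiction, an equivalence with a triangular model $g = (x, Q(x,y))$ of $y$-degree $d$ would force the configuration $(B(f_d), f_d^{-1}(L))$ together with its behaviour at infinity to be compatible with the rigid vertical fibration $\pi_1 \circ g = x$. The two transverse components of $f_d^{-1}(L)$, the two distinct indeterminacy points at infinity, and the $d$-fold contact of $L$ with $\Gamma$ should jointly forbid every triangular model when $d \geq 3$ (in the degenerate case $d = 2$ this obstruction vanishes, consistently with Lamy's theorem). Making this incompatibility rigorous, i.e.\ translating the invariants of $(\operatorname{Crit}(f_d), B(f_d))$ together with the asymptotic data on $L_\infty$ into an actual non-existence statement for $u$, is the technical core of the argument and the step I expect to be the main obstacle.
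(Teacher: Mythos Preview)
The paper does not actually prove Theorem~A: it merely states the result and refers to \cite{BP10} for the argument, so there is no in-paper proof against which to compare your proposal.

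On its own merits, your treatment of properness and the topological degree is correct. The displayed monic relation for $x$ is right, and the companion elimination for $y$ (from $x(1+y)=s_1-y$ one gets $s_2(1+y)^{d-1}=(s_1-y)^{d-1}y$, monic of degree $d$ in $y$ up to sign) yields finiteness of $f_{d\ast}$ and the generic fibre count.

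For the non-equivalence, however, you have an outline rather than a proof. The reformulation via Abhyankar--Moh is sound, and your Jacobian and branch-locus computations are accurate (indeed $J_{f_d}=x^{d-2}\bigl(x-(d-1)y-(d-2)xy\bigr)$, and the image of $V(x)$ is $\{s_2=0\}$). But you yourself flag that the ``technical core'' --- converting the geometry of $B(f_d)$, the preimage $f_d^{-1}(L)$, and the two indeterminacy points at infinity into an actual obstruction to the existence of a coordinate $u$ on the target with $u\circ f_d$ a coordinate on the source --- is not carried out. That step is not a detail; it is the entire content of the non-equivalence assertion. The invariants you list are suggestive, yet as written they are not tied to any argument excluding \emph{every} candidate $u$: for instance, nothing you have said prevents $V(u)$ from being transverse to both $L$ and $\Gamma$, in which case the contact order $(L\cdot\Gamma)_{(0,0)}=d$ carries no obvious information about $V(u\circ f_d)$. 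Until this step is made rigorous, the second half of the proposal remains a plausible strategy rather than a proof.
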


\begin{teoB} \label{teoB}
For all positive integers $d, \, a$, with $d \geq 3$ and $a \geq
2$, consider the polynomial map $f_{d, \,a} \colon \mathbb{C}^2
\lr \mathbb{C}^2$ given by
\begin{equation*}
f_{d, \, a}(x,\,y):=(x, \, y^d-dx^ay).
\end{equation*}
Then $f_{d, \,a}$ and $f_{d, \, b}$ are equivalent if and only if
$a=b$. It follows that if $d \geq 3$ there exist infinitely many
different equivalence classes of proper polynomial maps $f \colon
\mC^2 \lr \mC^2$ of fixed topological degree $d$.
\end{teoB}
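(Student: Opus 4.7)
The plan has two stages: first confirm that each $f_{d,a}$ is proper of topological degree $d$, then produce an invariant of the equivalence class that depends on $a$.

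For properness, I would apply Proposition \ref{prop:Jel}: the element $y$ satisfies the monic relation $T^d - d s_1^a T - s_2 = 0$ with coefficients in the subring $(f_{d,a})_* \mathbb{C}[s_1,s_2]$ (since $s_1$ pushes forward to $x$ and $s_2$ to $y^d - dx^a y$), while $x = s_1$ is trivially integral, so $\mathbb{C}[x,y]$ is finite over $(f_{d,a})_* \mathbb{C}[s_1,s_2]$ and $f_{d,a}$ is proper. The topological degree equals $d$ because the generic fibre is cut out by $x = s_1$ together with a separable degree-$d$ polynomial in $y$.

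For the only-if direction of the main equivalence, I would use the critical locus as the discriminating invariant. A Jacobian computation gives $J_{f_{d,a}} = d(y^{d-1} - x^a)$, so $\textrm{Crit}(f_{d,a}) = V(y^{d-1} - x^a)$. The partial derivatives $-a x^{a-1}$ and $(d-1) y^{d-2}$ vanish simultaneously only at the origin (since $a \geq 2$ and $d \geq 3$), so that is the unique singular point, and the germ there is the Brieskorn--Pham singularity of type $(d-1, a)$ with Milnor number $\mu_0 = (d-2)(a-1)$. As observed after Definition \ref{def:equiv}, an equivalence $f_2 = \Phi_2 \circ f_1 \circ \Phi_1$ forces $\textrm{Crit}(f_2) = \Phi_1^{-1}(\textrm{Crit}(f_1))$; the resulting automorphism of $\mathbb{C}^2$ must carry the unique singular point of one critical locus to that of the other, preserving its Milnor number. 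Hence, if $f_{d,a}$ and $f_{d,b}$ are equivalent, then $(d-2)(a-1) = (d-2)(b-1)$, which forces $a = b$ since $d \geq 3$. The infinitely-many-classes claim is then immediate from the family $\{f_{d,a}\}_{a \geq 2}$.

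The step I expect to demand the most care is verifying that the Milnor number at the origin is indeed well-defined as a global invariant of the embedded critical locus: one needs the uniqueness of the singular point established above, so that any biholomorphism of the critical curves as subvarieties of $\mathbb{C}^2$ is forced to match these distinguished points. Once this is in place, the argument reduces to transporting a standard local invariant across the equivalence relation.
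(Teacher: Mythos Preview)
Your proposal is correct and follows essentially the same route as the paper: compute the critical locus $V(y^{d-1}-x^a)$, observe that its unique singular point is the origin with Milnor number $(d-2)(a-1)$, and use the fact that equivalence forces the critical loci to be biholomorphic. The paper's own proof is just a one-line sketch of this argument, so your additional care with properness and the uniqueness of the singular point fills in exactly the details one would want.
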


The proof of Theorem B follows from the fact that, when $d \geq 3$
and $a \neq b$, the critical loci of $f_{d, \, a}$ and $f_{d, \,
b}$ have different Milnor number at their unique singular point $o
= (0, \, 0)$, so they cannot be biholomorphic. Theorem B provides
a \emph{discrete} family $\{f_{d, \, a} \}_{a \geq 2}$ of proper
maps of degree $d$ which are pairwise non-equivalent. Now we
refine this result, by showing the existence of a
\emph{continuous} family of maps with the same property. For all
 $d \in \mathbb{N}$, $\lambda \in \mathbb{C}$ set
 \begin{equation*}
 \begin{split}
 F_{d, \, \lambda}(x,y) & :=y^d+ \lambda x^{d-1}y + x^d, \\
 \Gamma_{d} & := \{ \lambda \in \mathbb{C} \; | \; \textrm{the polynomial } F_{d, \, \lambda}(x, \, y) \,\,\, \textrm{is square-free, i.e.
 it is the product of}  \\ & \textrm{$d$ pairwise distinct homogeneous linear factors} \}.
 \end{split}
 \end{equation*}
 One immediately sees that $\mathbb{C} \setminus \Gamma_d$ is a finite set of points, and that if $\lambda \in \Gamma_{d}$ then
 the affine variety $C_{d, \, \lambda}:=V(F_{d, \, \lambda})$ is the union of $d$ distinct lines through the origin.

 \begin{proposition} \label{Kang}
 Assume $d \geq 4$ and $\lambda, \, \mu \in \Gamma_d$.
 Then the two germs of plane curve singularities $(C_{d, \, \lambda}, \, o)$ and $(C_{d, \, \mu}, \, o)$ are analytically equivalent
if and only if $\lambda^d = \mu^d$.
\end{proposition}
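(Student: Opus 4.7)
The plan proceeds in three steps, of increasing difficulty. First, I would reduce the question from analytic equivalence of germs to a purely linear-algebraic problem about the homogeneous polynomials $F_{d, \lambda}$. Since $F_{d, \lambda}$ is homogeneous, the germ $(C_{d, \lambda}, o)$ is a cone, so its tangent cone at the origin coincides with $C_{d, \lambda}$ itself. Hence, if $\phi \colon (\mC^2, o) \lr (\mC^2, o)$ is a biholomorphism of germs with $\phi(C_{d, \lambda}) = C_{d, \mu}$, its differential $d\phi_o \in GL_2(\mC)$ realizes a linear isomorphism of tangent cones, mapping $C_{d, \lambda}$ to $C_{d, \mu}$ as sets. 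Since $\lambda, \mu \in \Gamma_d$ the polynomials $F_{d, \lambda}, F_{d, \mu}$ are square-free, so by the Nullstellensatz one obtains $g \in GL_2(\mC)$ and $c \in \mC^*$ with
\[
  F_{d, \mu}(g(x, y)) = c\, F_{d, \lambda}(x, y),
\]
and conversely any such pair $(g, c)$ induces a (linear) biholomorphism of the corresponding germs. The proposition is therefore equivalent to the statement that $F_{d, \lambda}$ and $F_{d, \mu}$ are $GL_2(\mC)$-equivalent up to a non-zero scalar if and only if $\lambda^d = \mu^d$.

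For the direction $\lambda^d = \mu^d \Rightarrow$ equivalence, write $\mu = \zeta \lambda$ with $\zeta^d = 1$ and take $g = \mathrm{diag}(1, \zeta^{-1})$. A direct computation gives
\[
  F_{d, \mu}(x, \zeta^{-1} y) = \zeta^{-d} y^d + \mu \zeta^{-1} x^{d-1} y + x^d = y^d + \lambda x^{d-1} y + x^d = F_{d, \lambda}(x, y),
\]
which settles this (easy) half.

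The converse is the crux of the matter. Assume $F_{d, \mu}(g(x, y)) = c\, F_{d, \lambda}(x, y)$ with $g = \bigl(\begin{smallmatrix} a & b \\ e & f \end{smallmatrix}\bigr) \in GL_2(\mC)$. The strategy is to exploit the extreme sparsity of $F_{d, \lambda}$: of the $d + 1$ monomials $x^i y^{d-i}$, only those with $i \in \{0, d-1, d\}$ occur with non-zero coefficient on the right-hand side. Expanding the left-hand side, the vanishing of the $d - 2$ \emph{middle} coefficients (for $i \in \{1, \ldots, d-2\}$) yields a system of polynomial equations in $a, b, e, f, \mu$. A case analysis according to whether $b$ and $f$ vanish shows that, for $d \geq 4$, the only consistent possibilities are $g$ diagonal or $g$ anti-diagonal. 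In the diagonal case, matching the three non-zero coefficient equations gives $(a/f)^d = 1$ and $\mu = (a/f)\lambda$, whence $\mu^d = \lambda^d$; the anti-diagonal case forces $\lambda = \mu = 0$, so the conclusion holds trivially.

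The main obstacle is eliminating the remaining sub-case in which $b$ and $f$ are both non-zero. Setting $\alpha = a/b$, $\varepsilon = e/f$, $\beta = \varepsilon/\alpha$, $S = (f/b)^d$ and $T = f/b$, the middle-coefficient equations become $\beta^{i} S + 1 + (\mu T/d)(i \beta + d - i) = 0$ for $i = 2, \ldots, d-2$, and subtracting consecutive such equations yields $(1 - \beta)[\beta^{i} S + \mu T / d] = 0$. The factor $\beta = 1$ corresponds to $af = be$, i.e.\ $\det g = 0$, and must be excluded; the vanishing of the other factor, for two distinct values of $i$ (available as soon as $d \geq 6$), forces $\beta = 0$, i.e.\ $e = 0$, which collapses to a diagonal/triangular case already handled. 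The residual low-degree cases $d = 4, 5$ can be settled either by a direct (but finite) resultant computation on the remaining equations, or more conceptually by invoking the classical invariant theory of binary forms: by the standard formula for trinomials the discriminant of $p_\lambda(s) = s^d + \lambda s + 1$ is a polynomial in $\lambda^d$, and a suitable combination of $SL_2(\mC)$-invariants of $F_{d, \lambda}$ then produces a function of $[F_{d, \lambda}]$ which is invariant under both $GL_2$-action and scalar multiplication, and which on $\Gamma_d$ determines $\lambda^d$ uniquely.
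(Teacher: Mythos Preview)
The paper does not prove this proposition at all: its entire proof is the single line ``See \cite[Theorems 1.3 and 2.2]{K93}''. So there is no argument in the paper to compare against, only a citation.

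Your proposal is therefore a genuinely different route: you attempt a self-contained proof. The reduction in Step~1 (analytic equivalence of homogeneous germs $\Longleftrightarrow$ $GL_2(\mC)$-equivalence of the defining forms, via the tangent cone) is correct and is exactly the right starting point; the easy direction is fine. For the converse, your strategy of exploiting the sparsity of $F_{d,\lambda}$ is sound, and the identity you write for the middle coefficients is in fact correct once one divides the $i$-th coefficient equation by $\binom{d}{i}a^i b^{d-i}$. Two remarks, though. First, your count is slightly off: the consecutive-difference trick already yields two independent relations $\beta^i S+\mu T/d=0$ as soon as $d\geq 5$ (the differences are indexed by $i=1,\ldots,d-3$), so only $d=4$ genuinely requires a separate argument; for $d=4$ the cross-ratio/$j$-invariant computation you allude to is indeed the natural tool, as the paper itself remarks just after the proposition. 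Second, when your argument produces $e=0$ in the sub-case $b,f\neq 0$, you say this ``collapses to a diagonal/triangular case already handled'', but the lower-triangular case $e=0$, $b\neq 0$ was not in fact treated earlier; you still need one more short step (the middle equations then read $\binom{d}{i}b+\mu f\binom{d-1}{i}=0$, which for two distinct $i$ forces a contradiction since $(d-i)/d$ is not constant in $i$), after which only the diagonal and anti-diagonal possibilities remain. With these two points addressed your outline becomes a complete proof, whereas the paper simply outsources the result to Kang.
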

\begin{proof}
See \cite[Theorems 1.3 and 2.2]{K93}
\end{proof}
Notice that the Milnor number of $C_{d, \, \lambda}$ at the origin does not depend on $\lambda$,
 since two ordinary $d$-multiple points are always topologically equivalent.
 Proposition \ref{Kang} is a particular case of a more general result saying that when $d \geq 4$
 there are infinitely many \emph{analytic} types of ordinary $d$-multiple points. For instance, if $d=4$ then
 the analytic type depends precisely on the cross-ratio
 of the four tangents, see \cite[Example 3.43.2]{GLS07}. \\
Now, setting
\begin{equation*}
  Q_{d, \, \lambda}(x, \, y):= \frac{1}{d+1}y^{d+1}+ \frac{\lambda}{2}x^{d-1}y^2 + x^dy,
\end{equation*}
we can prove
\begin{teoB1}
For all $d \geq 4$ and $\lambda \in \Gamma_d$, consider the proper polynomial map defined by
\begin{equation*}
f_{d, \, \lambda}(x, \, y):=(x, \, Q_{d, \, \lambda}(x, \, y)).
\end{equation*}
If $\lambda^d \neq \mu^d$, then $f_{d, \,  \lambda}$ and $f_{d, \, \mu}$
are \emph{not} equivalent. In particular, for all $d \geq 4$ there exist a \emph{continuous} family
of proper polynomial maps of degree $d$ whose members are pairwise non-equivalent.
\end{teoB1}
\begin{proof}
The critical locus of $f_{d, \, \lambda}$ is precisely the curve $C_{d, \, \lambda}$. Then the assertion
is an immediate
 consequence of Proposition \ref{Kang}.

\end{proof}

The previous results suggests that a satisfactory description of
all equivalence classes of proper polynomial maps $ f \colon
\mathbb{C}^2 \lr \mathbb{C}^2$ in the case $d \geq 3$ is at the
moment out of reach; nevertheless, one could hope at least to
classify those proper maps enjoying some additional property. In
\cite{BP10} we completely solved this problem in the case of
\emph{Galois coverings}; some of our computations were carried out
by using the Computer Algebra Systems \verb|GAP4| and
\verb|Singular|, see \cite{GAP4} and \cite{SING}. Let $f \colon
\mathbb{C}^2 \lr \mathbb{C}^2$ be a polynomial map which is a
Galois covering with finite Galois group $G$. Then $f$ is proper
and its topological degree equals $|G|$; moreover $G \subset
\textrm{Aut}(\mathbb{C}^2)$, and $f$ can be identified with the
quotient map $\mathbb{C}^2 \lr \mathbb{C}^2/G$. Since $G$ is a
finite group, we may assume $G \subset \textrm{GL}(2,\mathbb{C})$
by a polynomial change of coordinates (\cite[Corollary 4.4]{Ka79})
and, since $\mathbb{C}^2/G \cong \mathbb{C}^2$, it follows that
$G$ is a \emph{finite complex reflection group}. Let us denote by
$\mathbb{C}[x,\,y]^{G}$ the subalgebra of $G$-invariant
polynomials; then the following two conditions are equivalent, see
\cite[p.380]{Coh76}:
\begin{itemize}
\item[$(i)$]there are two algebraically independent homogeneous
polynomials $\phi_1, \, \phi_2 \in \mathbb{C}[x,\, y]^{G}$ which
satisfy $|G|=\textrm{deg}(\phi_1) \cdot \textrm{deg}(\phi_2)$;
\item[$(ii)$] there are two algebraically independent homogeneous
polynomials $\phi_1, \, \phi_2 \in \mathbb{C}[x, \, y]^{G}$ such
that $1$, $\phi_1$, $\phi_2$  generate
 $\mathbb{C}[x,\,y]^{G}$ as an algebra over $\mathbb{C}$.
\end{itemize}
We say that $\phi_1, \, \phi_2$ are a \emph{basic set of invariants}
for $G$. Furthermore, putting $d_1:=\textrm{deg}(\phi_1)$,
$d_2:=\textrm{deg}(\phi_2)$, the set $\{d_1, \, d_2\}$  is
independent of the particular choice of $\phi_1,\, \phi_2$. We call
$d_1$, $d_2$ the \emph{degrees} of $G$.
Complex reflection groups were
classified in all dimensions by Shephard and Todd, see
\cite{ST54} and \cite{Coh76}. Let us explain their classification
in the case $n=2$. If $G$ is reducible, i.e. if there exists a
$1$-dimensional linear subspace $V \subset \mathbb{C}^2$ which is
invariant under $G$, then we are in one of the following cases:
\begin{itemize}
\item[$(1)$] $G=\mathbb{Z}_m$, generated by $g= \left(
\begin{array}{cc}
1 & 0 \\
0 & \exp(2 \pi i/m)
\end{array}
\right);$
\item[$(2)$] $G=\mathbb{Z}_m \times \mathbb{Z}_n$, generated by \\
$g_1= \left(
\begin{array}{cc}
\exp(2 \pi i/m) & 0 \\
0 & 1
\end{array}
\right)$ \; and \; $g_2= \left(
\begin{array}{cc}
1 & 0 \\
0 & \exp(2 \pi i/n)
\end{array}
\right)$.
\end{itemize}
If $G$ is irreducible, there exists an infinite family $G(m, \, p,
\, 2)$, depending on two positive integer parameters $m$, $p$,
with $p|m$, and $19$ exceptional cases, that in \cite{ST54} are
numbered from $4$ to $22$. We start by describing the groups
belonging to the infinite family. One has
\begin{equation*}
G(m, \, p, \, 2)=\mathbb{Z}_2 \ltimes A(m, \; p, \; 2),
\end{equation*}
where $A(m, \, p, \, 2)$ is the abelian group of order $m^2/p$
whose elements are the matrices $\left(
                                  \begin{array}{cc}
                                    \theta^{\alpha_1} & 0 \\
                                    0 & \theta^{\alpha_2} \\
                                  \end{array}
                                \right)$,
with $\theta=\exp(2 \pi i/m)$ and $\alpha_1+\alpha_2 \equiv 0$
(mod $p$), whereas $\mathbb{Z}_2$ is generated by $\left(
                                  \begin{array}{cc}
                                    0 & 1 \\
                                    1 & 0 \\
                                  \end{array}
                                \right)$.
In particular, $G(m, \, m, \, 2)$ is the dihedral group of order
$2m$. \\
Now let us consider the exceptional groups in the Shephard-Todd's
list. We closely follow the treatment given in \cite{BP10}, which
was in turn inspired by \cite{ST54}. For $p=3, \, 4, \, 5$, the
abstract group
\begin{equation*}
\langle s, \, t \, | \,s^2=t^3=(st)^p=1 \rangle
\end{equation*}
is isomorphic to $\mathcal{A}_4$, $\mathcal{S}_4$ and
$\mathcal{A}_5$, respectively. These are the well-known groups of
symmetries of regular polyhedra: $\mathcal{A}_4$ is the symmetry
group of the tetrahedron, $\mathcal{S}_4$ is the symmetry group of
the cube (and of the octahedron) and $\mathcal{A}_5$ is the symmetry
group of the dodecahedron (and the icosahedron). We take Klein's
representation of these groups by complex matrices (\cite{Kl84}),
and we call $S_1$, $T_1$ the matrices corresponding to the
generators $s$ and $t$, respectively. Therefore the exceptional
finite complex reflection groups are generated by matrices
\begin{equation*}
S=\lambda S_1, \quad T=\mu T_1, \quad Z= \exp(2 \pi i /k) I,
\end{equation*}
where $\lambda$, $\mu$ are suitably chosen roots of unity and $k$ is
a suitable integer. The corresponding abstract presentations are of
the form
\begin{equation} \label{group-presentation}
\langle S, \, T, \, Z \,| \, S^2=Z^{k_1}, \, T^3=Z^{k_2}, \,
(ST)^p=Z^{k_3}, \, [S,Z]=I, \, [T, Z]=I, \, Z^k=I \rangle
\end{equation}
where $p=1, \, 2, \, 3$ and $k_1$, $k_2$, $k_3$, $k$ are suitably
chosen integers. We shall arrange the possible values of $\lambda$,
$\mu$, $k_1$, $k_2$, $k_3$, $k$ in tabular form, according to
Shephard-Todd's list (\cite[p. 280-286]{ST54}).  \\ \\
\emph{Exceptional groups derived from} $\mathcal{A}_4$. Set
$\omega=\exp(2 \pi i /3)$, $\varepsilon=\exp(2 \pi i /8)$. We have
\begin{equation*}
S_1=\left(
       \begin{array}{cc}
         i & 0 \\
         0 & -i \\
       \end{array}
     \right), \quad
T_1= \frac{1}{\sqrt{2}} \left(
       \begin{array}{cc}
         \varepsilon & \varepsilon^3 \\
         \varepsilon & \varepsilon^7 \\
       \end{array}
     \right).
\end{equation*}
The four corresponding groups are shown in Table
\ref{exceptional-from-A4} below. Here \verb|IdSmallGroup|$(G)$
denotes the label of $G$ in the \verb|GAP4| database of small
groups, which includes all groups of order less than $2000$, with the exception
of $1024$ (\cite{GAP4}). For instance, one has
\verb|[24,3]|$=\textrm{SL}_2(\mathbb{F}_3)$ and this means that
$\textrm{SL}_2(\mathbb{F}_3)$ is the third in the list of groups of
order $24$.

\begin{table}[H]
\begin{center}
\begin{tabular}{c c c c c c c c c}
\hline
$ $ & \verb|IdSmall| & $ $ & $ $ & $ $ & $ $ & $ $ & $ $ & $ $ \\
No. & \verb|Group|$(G)$ & $\lambda$ & $\mu$ & $k_1$ & $k_2$ & $k_3$
& $k$ & Degrees \\
\hline
$4$ & \verb|[24,3]| & $-1$ & $-\omega$ & $1$ & $2$ & $2$ & $2$ & $4,\, 6$ \\
$5$ & \verb|[72,25]| & $- \omega$ & $- \omega$ & $1$ & $6$ & $6$ &
$6$ &
$6, \, 12$ \\
$6$ & \verb|[48,33]| & $i$ & $- \omega$ & $4$ & $4$ & $1$ & $4$ &
$4, \, 12$ \\
$7$ & \verb|[144,157]| & $i \omega$ & $- \omega$ & $8$ & $12$ & $3$
& $12$ & $12, \, 12$ \\ \hline
\end{tabular}
\end{center}
\caption{} \label{exceptional-from-A4}
\end{table}

\noindent \emph{Exceptional groups derived from} $\mathcal{S}_4$. We
have
\begin{equation*}
S_1= \frac{1}{\sqrt{2}}\left(
       \begin{array}{cc}
         i & 1 \\
         -1 & -i \\
       \end{array}
     \right), \quad
T_1= \frac{1}{\sqrt{2}} \left(
       \begin{array}{cc}
         \varepsilon & \varepsilon \\
         \varepsilon^3 & \varepsilon^7 \\
       \end{array}
     \right).
\end{equation*}
The eight corresponding groups are shown in Table
\ref{exceptional-from-S4} below.
\begin{table}[H]
\begin{center}
\begin{tabular}{c c c c c c c c c}
\hline
$ $ & \verb|IdSmall| & $ $ & $ $ & $ $ & $ $ & $ $ & $ $ & $ $ \\
No. & \verb|Group|$(G)$ & $\lambda$ & $\mu$ & $k_1$ & $k_2$ & $k_3$
& $k$ & Degrees \\
\hline
$8$ & \verb|[96,67]| & $\varepsilon^3$ & $1$ & $1$ & $2$ & $4$ & $4$ & $8,\, 12$ \\
$9$ & \verb|[192,963]| & $i$ & $\varepsilon$ & $8$ & $7$ & $8$ & $8$ & $8,\, 24$ \\
$10$ & \verb|[288,400]| & $\varepsilon^7 \omega^2$ & $- \omega$ &
$7$ & $12$ & $12$ & $12$ & $12,\, 24$ \\
$11$ & \verb|[576,5472]| & $i$ & $ \varepsilon \omega$ &
$24$ & $21$ & $8$ & $24$ & $24,\, 24$ \\
$12$ & \verb|[48,29]| & $i$ & $1$ &
$2$ & $1$ & $1$ & $2$ & $6,\, 8$ \\
$13$ & \verb|[96,192]| & $i$ & $i$ &
$4$ & $1$ & $2$ & $4$ & $8,\, 12$ \\
$14$ & \verb|[144,122]| & $i$ & $- \omega$ &
$6$ & $6$ & $5$ & $6$ & $6,\, 24$ \\
$15$ & \verb|[288,903]| & $i$ & $i \omega$ &
$12$ & $3$ & $10$ & $12$ & $12,\, 24$ \\
 \hline
\end{tabular}
\end{center}
\caption{} \label{exceptional-from-S4}
\end{table}

\noindent \emph{Exceptional groups derived from} $\mathcal{A}_5$.
Set $\eta= \exp(2 \pi i /5)$. We have
\begin{equation*}
S_1= \frac{1}{\sqrt{5}}\left(
       \begin{array}{cc}
         \eta^4 - \eta & \eta^2 - \eta^3 \\
         \eta^2 - \eta^3 & \eta - \eta^4 \\
       \end{array}
     \right), \quad
T_1= \frac{1}{\sqrt{5}} \left(
       \begin{array}{cc}
         \eta^2 - \eta^4 & \eta^4 -1 \\
         1 - \eta & \eta^3 - \eta \\
       \end{array}
     \right).
\end{equation*}
The seven corresponding groups are shown in Table
\ref{exceptional-from-A5} below.

\begin{table}[H]
\begin{center}
\begin{tabular}{c c c c c c c c c}
\hline
$ $ & \verb|IdSmall| & $ $ & $ $ & $ $ & $ $ & $ $ & $ $ & $ $ \\
No. & \verb|Group|$(G)$ & $\lambda$ & $\mu$ & $k_1$ & $k_2$ & $k_3$
& $k$ & Degrees \\
\hline
$16$ & \verb|[600,54]| & $-\eta^3$ & $1$ & $7$ & $10$ & $10$ & $10$ & $20,\, 30$ \\
$17$ & \verb|[1200,483]| & $i$ & $i \eta^3$ & $20$ & $11$ & $20$ & $20$ & $20,\, 60$ \\
$18$ & \verb|[1800,328]| & $- \omega \eta^3$ & $\omega^2$ &
$11$ & $30$ & $30$ & $30$ & $30,\, 60$ \\
$19$ & \verb|[3600, ]| & $i \omega$ & $i \eta^3$ &
$40$ & $33$ & $40$ & $60$ & $60,\, 60$ \\
$20$ & \verb|[360,51]| & $1$ & $\omega^2$ &
$3$ & $6$ & $5$ & $6$ & $12,\, 30$ \\
$21$ & \verb|[720,420]| & $i$ & $\omega^2$ &
$12$ & $12$ & $1$ & $12$ & $12,\, 60$ \\
$22$ & \verb|[240, 93]| & $i$ & $1$ &
$4$ & $4$ & $3$ & $4$ & $12,\, 20$ \\
\hline
\end{tabular}
\end{center}
\caption{} \label{exceptional-from-A5}
\end{table}

This allows us to obtain the classification, up to equivalence, of
finite Galois coverings $f \colon \mathbb{C}^2 \lr \mathbb{C}^2$.
Set
\begin{equation*}
\begin{split}
\textsf{a}_4(x,\,y)&=x^4+(4 \xi -2)x^2y^2+y^4, \quad \xi=\exp(2 \pi i/6), \\
\textsf{b}_6(x,\, y)&=x^5y-xy^5, \\
\textsf{c}_8(x,\,y)&=x^8+14x^4y^4+y^8, \\ \textsf{d}_{12}(x, \,
y)&=
x^{12}-33x^8y^4-33x^4y^8+y^{12}, \\
\textsf{e}_{12}(x, \, y)&=x^{11}y+11x^6y^6-xy^{11}, \\
\textsf{f}_{20}(x, \,
y)&=x^{20}-228x^{15}y^5+494x^{10}y^{10}+228x^5y^{15}+y^{20}, \\
\textsf{g}_{30}(x,y)&=x^{30}+522x^{25}y^5-10005x^{20}y^{10}-10005x^{10}y^{20}-522x^5y^{25}+y^{30}.
\end{split}
\end{equation*}
Then we have

\begin{teoC} \label{teoC}
Let $f \colon \mathbb{C}^2 \lr \mathbb{C}^2$
be a polynomial map which is a Galois covering with finite Galois
group $G$. Then $f$ is equivalent to one of the normal forms
described in Table \emph{\ref{table:Galois}} below. Furthermore,
these maps are pairwise non-equivalent, with the only exception of
$\mathfrak{f}_{2, \, 1, \, 2}$ and $\mathfrak{f}_{4, \, 4, \, 2}$.
\end{teoC}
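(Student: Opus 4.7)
The proof proceeds in two steps. \textbf{Existence of the normal form.} Since $f$ is a Galois covering with Galois group $G$, the group $G$ acts effectively on $\mC^2$ by polynomial automorphisms and $f$ coincides with the quotient map $\mC^2 \lr \mC^2/G$, whose topological degree equals $|G|$. By \cite[Corollary 4.4]{Ka79}, after a polynomial change of coordinates we may assume $G \subset \textrm{GL}(2, \mC)$; since $\mC^2/G \cong \mC^2$ is smooth, the Chevalley--Shephard--Todd theorem forces $G$ to be a finite complex reflection group, and Shephard--Todd's classification in dimension two lists the possibilities as the reducible groups $\mZ_m$ or $\mZ_m \times \mZ_n$, the imprimitive family $G(m,p,2)$, and the $19$ exceptional groups of Tables \ref{exceptional-from-A4}--\ref{exceptional-from-A5}. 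Once $G$ is fixed, a basic set of invariants $\phi_1, \phi_2$ satisfies $\mC[x,y]^G = \mC[\phi_1, \phi_2]$, so the quotient map is explicitly
\[
(x,y) \lr (\phi_1(x,y), \phi_2(x,y)),
\]
and this is the normal form inserted in the row of $G$ in Table~\ref{table:Galois}. For the reducible and imprimitive series the invariants are classical (built from $xy$ and elementary symmetric functions in $x^m, y^m$); for the exceptional groups one starts from Klein's matrix realisations $S_1, T_1$ and from the polyhedral invariants $\textsf{a}_4, \textsf{b}_6, \ldots, \textsf{g}_{30}$ of $\mathcal{A}_4, \mathcal{S}_4, \mathcal{A}_5$, then twists them by the central scalar $Z$ of (\ref{group-presentation}) according to the parameters $(\lambda, \mu, k_1, k_2, k_3, k)$ read off the tables. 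This is a sizeable but mechanical case-by-case computation, best carried out in \verb|Singular|.

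\textbf{Non-equivalence.} If two Galois coverings $f_i \colon \mC^2 \lr \mC^2$, $i=1,2$, with linearised Galois groups $G_i \subset \textrm{GL}(2,\mC)$ are equivalent, then there exists $\Phi \in \textrm{Aut}(\mC^2)$ with $G_2 = \Phi^{-1} G_1 \Phi$; by the uniqueness (up to linear conjugacy) of the linearisation of a finite group action on $\mC^2$ one may assume $\Phi$ linear, so $G_1$ and $G_2$ are conjugate in $\textrm{GL}(2,\mC)$. In particular $G_1 \cong G_2$ and the unordered pair $\{d_1,d_2\}$ of degrees is preserved. A direct inspection of the \verb|IdSmallGroup| and degrees columns of Tables \ref{exceptional-from-A4}--\ref{exceptional-from-A5}, together with the corresponding data for the reducible and imprimitive series, shows that every pair of distinct entries of Table~\ref{table:Galois} is separated by this data, with the single exception of $G(2,1,2)$ and $G(4,4,2)$: both are isomorphic to the dihedral group of order $8$ and both have degrees $\{2,4\}$. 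Since this dihedral group admits a unique faithful irreducible $2$-dimensional complex representation up to isomorphism, the two subgroups are $\textrm{GL}(2,\mC)$-conjugate and a linear change of coordinates converts the basic invariants of $G(2,1,2)$ into those of $G(4,4,2)$, whence $\mathfrak{f}_{2,1,2}$ and $\mathfrak{f}_{4,4,2}$ are equivalent; this accounts exactly for the coincidence stated in the theorem.

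The delicate part of the argument is this last step. A priori, the pair (abstract group, unordered degrees) need not separate inequivalent normal forms, since the same abstract group could in principle admit several non-conjugate faithful reflection representations in $\textrm{GL}(2,\mC)$ sharing the same degrees; the content of the non-equivalence step is the verification, via the Shephard--Todd tables and the character data of the $19$ exceptional groups, that no such phenomenon occurs in Table~\ref{table:Galois} beyond the single coincidence $\mathfrak{f}_{2,1,2} \sim \mathfrak{f}_{4,4,2}$.
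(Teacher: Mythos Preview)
Your proposal follows the same route the paper sketches (with full details deferred to \cite{BP10}): linearise the $G$-action via \cite[Corollary~4.4]{Ka79}, identify $G$ as a finite complex reflection group because $\mC^2/G\cong\mC^2$, run through the Shephard--Todd classification in rank two, and realise the quotient map by a basic set of invariants $(\phi_1,\phi_2)$. On the existence side you are aligned with the paper.

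For non-equivalence you correctly reduce to $\textrm{GL}(2,\mC)$-conjugacy of the groups (the derivative of the conjugating automorphism at a common fixed point does the job), but then take a slight detour by proposing to separate the rows of Table~\ref{table:Galois} via the pair (abstract isomorphism type, multiset of degrees). The more direct argument, implicit in the paper's appeal to \cite{ST54} and \cite{Coh76}, is that the Shephard--Todd list is \emph{by construction} a classification of finite reflection groups up to $\textrm{GL}(2,\mC)$-conjugacy: the exceptional groups $G_4,\ldots,G_{22}$ are pairwise non-conjugate and not conjugate to any member of the infinite family, and within the irreducible $G(m,p,2)$ the only redundancy in the parametrisation is exactly $G(2,1,2)=G(4,4,2)$. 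So once you have established ``equivalent $\Rightarrow$ $\textrm{GL}$-conjugate'', the case analysis you propose becomes unnecessary and the caveat in your last paragraph dissolves. Apart from this streamlining, your argument is correct and matches the paper's.
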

\begin{table}[H]
\begin{center}
\begin{tabular}{c c c c}
\hline
Map & $\phi_1, \, \phi_2$ & $G$ & Branch locus \\
\hline
$\mathfrak{f}_m$ & $x, \, y^m$ & $\mathbb{Z}_m$ & $y=0$ \\
$\mathfrak{f}_{m, \, n}$ & $x^m, \, y^n$ & $\mathbb{Z}_m \times
\mathbb{Z}_n$
& $xy=0$ \\
$\mathfrak{f}_{m, \, p, \, 2}$ & $x^{m/p}y^{m/p},\, x^m + y^m$ &
$G(m, \, p,\,
2)$ &  $x(y^2-4x^p)=0 \quad \textrm{if} \; p \neq m$ \\
 & & & $\quad \quad y^2-4x^p=0 \quad \textrm{if} \; p=m$ \\
$\tilde{\mathfrak{f}}_4$ & $\textsf{a}_4, \, \textsf{b}_6$ & $G_4=$\verb|[24, 3]| & $x^3+(-24\xi+12)y^2=0$ \\
$\tilde{\mathfrak{f}}_5$ & $\textsf{b}_6, \, (\textsf{a}_4)^3$ & $G_5=$\verb|[72, 25]| & $y(x^2+ (\frac{1}{18\xi}-\frac{1}{36})y)=0$  \\
$\tilde{\mathfrak{f}}_6$ & $\textsf{a}_4, \, (\textsf{b}_6)^2$ & $G_6=$\verb|[48, 33]| &  $y(x^3+(-24\xi+12)y^2)=0$\\
$\tilde{\mathfrak{f}}_7$ & $(\textsf{b}_6)^2, \, (\textsf{a}_4)^3$ & $G_7=$ \verb|[144, 157]| & $xy(x+ (\frac{1}{18\xi}-\frac{1}{36})y)=0$\\
$\tilde{\mathfrak{f}}_8$ & $\textsf{c}_8, \, \textsf{d}_{12}$ & $G_8=$\verb|[96, 67]| & $y^2-x^3=0$\\
$\tilde{\mathfrak{f}}_9$ & $\textsf{c}_8, \, (\textsf{d}_{12})^2$ & $G_9=$ \verb|[192, 963]| & $y(y-x^3)=0$ \\
$\tilde{\mathfrak{f}}_{10}$ & $\textsf{d}_{12}, \, (\textsf{c}_8)^3$ & $G_{10}=$\verb|[288, 400]| & $y(y-x^2)$=0 \\
$\tilde{\mathfrak{f}}_{11}$ & $(\textsf{d}_{12})^2, \, (\textsf{c}_8)^3$ & $G_{11}=$\verb|[576, 5472]| & $xy(x-y)=0$\\
$\tilde{\mathfrak{f}}_{12}$ & $\textsf{b}_6, \, \textsf{c}_8$ & $G_{12}=$\verb|[48, 29]| & $y^3-108x^4=0$\\
$\tilde{\mathfrak{f}}_{13}$ & $\textsf{c}_8, \, (\textsf{b}_6)^2$ & $G_{13}=$\verb|[96, 192]| & $y(x^3-108y^2)$=0 \\
$\tilde{\mathfrak{f}}_{14}$ & $\textsf{b}_6, \, (\textsf{d}_{12})^2$ & $G_{14}=$\verb|[144, 122]| & $y(y+108x^4)$=0 \\
$\tilde{\mathfrak{f}}_{15}$ & $(\textsf{b}_6)^2, \, (\textsf{d}_{12})^2$ & $G_{15}=$ \verb|[288, 903]| & $xy(y+108x^2)=0$  \\
$\tilde{\mathfrak{f}}_{16}$ & $\textsf{f}_{20}, \, \textsf{g}_{30}$ & $G_{16}=$\verb|[600, 54]| &  $y^2-x^3=0$\\
$\tilde{\mathfrak{f}}_{17}$ & $\textsf{f}_{20}, \, (\textsf{g}_{30})^2$ & $G_{17}=$\verb|[1200, 483]| & $y(y-x^3)=0$\\
$\tilde{\mathfrak{f}}_{18}$ & $\textsf{g}_{30}, \, (\textsf{f}_{20})^3$ & $G_{18}=$\verb|[1800, 328]| & $y(y-x^2)=0$\\
$\tilde{\mathfrak{f}}_{19}$ & $(\textsf{g}_{30})^2, \, (\textsf{f}_{20})^3$ & $G_{19}=$ \verb|[3600, ]| & $xy(x-y)=0$ \\
$\tilde{\mathfrak{f}}_{20}$ & $\textsf{e}_{12}, \, \textsf{g}_{30}$ & $G_{20}=$ \verb|[360, 51]| & $y^2-1728 x^5=0$\\
$\tilde{\mathfrak{f}}_{21}$ & $\textsf{e}_{12}, \, (\textsf{g}_{30})^2$ & $G_{21}=$\verb|[720, 420]| & $y(y-1728x^5)=0$\\
$\tilde{\mathfrak{f}}_{22}$ & $\textsf{e}_{12}, \, \textsf{f}_{20}$ & $G_{22}=$\verb|[240, 93]| & $y^3+1728x^5=0$ \\

\hline

\end{tabular}
\end{center}
\caption{} \label{table:Galois}
\end{table}

The following corollary is a generalization of Lamy's result to
the case of Galois coverings of arbitrary degree.

\begin{corollary} \label{cor:finite-galois}
For all $d \geq 2$, there exist only finitely many equivalence
classes of Galois coverings $f \colon \mathbb{C}^2 \lr
\mathbb{C}^2$
 of topological degree $d$.
\end{corollary}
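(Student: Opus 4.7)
The plan is to deduce the corollary directly from the classification given in Theorem C. Since every Galois covering $f\colon \mathbb{C}^2 \to \mathbb{C}^2$ is equivalent to exactly one of the normal forms in Table \ref{table:Galois} (modulo the single identification $\mathfrak{f}_{2,\,1,\,2}\sim\mathfrak{f}_{4,\,4,\,2}$), it suffices to count how many of these normal forms can have a prescribed topological degree $d$. Because the topological degree of a Galois cover with group $G$ equals $|G|$, this reduces to bounding, for each $d$, the number of rows in the table with $|G|=d$.

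I would go through the table row by row. First, the $19$ exceptional entries $\tilde{\mathfrak f}_4,\dots,\tilde{\mathfrak f}_{22}$ contribute only finitely many equivalence classes in total, so only finitely many of them can have group order $d$. Next, for the cyclic family $\mathfrak f_m$ the condition $|G|=d$ forces $m=d$, giving a single class. For $\mathfrak f_{m,\,n}$ with $G=\mathbb{Z}_m\times\mathbb{Z}_n$ the condition reads $mn=d$, and the number of ordered factorizations of $d$ is finite. Finally, for the family $\mathfrak f_{m,\,p,\,2}$ with $G=G(m,p,2)$, one uses $|G(m,p,2)|=2m^2/p$: writing $m=pq$ with $q\in\mathbb{Z}_{\geq 1}$ (since $p\mid m$), the equation $2pq^2=d$ admits only finitely many solutions $(p,q)$, hence only finitely many admissible pairs $(m,p)$.

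Summing these finite contributions gives a finite total number of equivalence classes of Galois covers of topological degree $d$, which is exactly the claim. No step is really an obstacle here: the whole argument is a bookkeeping exercise once Theorem C is in hand, and the only point that deserves a line of verification is the finiteness for the infinite family $G(m,p,2)$, which follows from the elementary fact that $2pq^2=d$ has finitely many positive integer solutions.
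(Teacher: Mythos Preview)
Your argument is correct and is exactly the approach the paper has in mind: the corollary is stated immediately after Theorem~C with no proof, as it is meant to follow by inspection of Table~\ref{table:Galois}. Your row-by-row count (the $19$ exceptional groups, the single cyclic class $\mathfrak{f}_d$, the finitely many factorizations $mn=d$ for $\mathfrak{f}_{m,n}$, and the finitely many solutions of $2m^2/p=d$ with $p\mid m$ for $\mathfrak{f}_{m,p,2}$) is precisely the verification the paper leaves implicit.
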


\section{The case $n \geq 3$} \label{sec:n=3}

We have only few general results about proper polynomial self-maps
of $\mathbb{C}^n$ for $n \geq 3$. First of all, we can prove the
following analogue of Theorem B:
\begin{teoD}
Let $\emph{\textbf{a}}:=(a_1, \ldots, a_{n-1}) \in \mathbb{N}^{n-1}$ be
such that $a_i \geq 2$ for all $i$. For all $d \geq 3,$ consider
the proper polynomial map  $f_{d, \, \emph{\textbf{a}}} \colon
\mathbb{C}^n \lr \mC^n$ defined by
\begin{equation*}
f_{d, \, \emph{\textbf{a}}}(x_1, \ldots, x_n):= (x_1,\, x_2, \ldots,
x_{n-1}, \, x_n^d-d(x_1^{a_1}+x_2^{a_2}+ \cdots
+x_{n-1}^{a_{n-1}})x_n).
\end{equation*}
If $\prod_{i=1}^{n-1}(a_i-1) \neq \prod_{i=1}^{n-1}(b_i-1)$ then
$f_{d, \, \emph{\textbf{a}}}$ and $f_{d, \, \emph{\textbf{b}}}$ are \emph{not}
equivalent. It follows that for all $d \geq 3$ there exist
infinitely many different equivalence classes of proper polynomial
maps $f \colon \mC^n \lr \mC^n$ of topological degree $d$.
\end{teoD}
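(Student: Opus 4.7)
My plan is to follow the strategy used to prove Theorem B, replacing the planar Brieskorn-Pham curve by a higher-dimensional Brieskorn-Pham hypersurface whose Milnor number at the origin distinguishes the various maps $f_{d,\mathbf{a}}$.

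First, I would verify that $f_{d,\mathbf{a}}$ is proper of topological degree $d$ using Jelonek's criterion (Proposition \ref{prop:Jel}): the image subring contains $x_1,\ldots,x_{n-1}$ directly, and $x_n$ satisfies the monic integral relation $x_n^d - d\bigl(\sum_{i=1}^{n-1} x_i^{a_i}\bigr)x_n - s_n = 0$; the same relation shows that a generic fibre has $d$ points. Then a direct Jacobian computation (the matrix is block triangular with $I_{n-1}$ in the top-left and lower-right entry $d\bigl(x_n^{d-1} - \sum_{i=1}^{n-1} x_i^{a_i}\bigr)$) identifies the critical locus as
\begin{equation*}
\mathrm{Crit}(f_{d,\mathbf{a}}) \;=\; V\bigl(x_n^{d-1} - x_1^{a_1} - \cdots - x_{n-1}^{a_{n-1}}\bigr).
\end{equation*}
Since $d \geq 3$ and each $a_i \geq 2$, the partial derivatives of this polynomial vanish simultaneously only at $o=(0,\ldots,0)$, so the critical locus has a unique singular point of Brieskorn-Pham type $(a_1,\ldots,a_{n-1},d-1)$, and the standard Milnor number formula for weighted-homogeneous isolated hypersurface singularities yields
\begin{equation*}
\mu_{\mathbf{a}} \;:=\; \mu\bigl(\mathrm{Crit}(f_{d,\mathbf{a}}),\,o\bigr) \;=\; (d-2)\prod_{i=1}^{n-1}(a_i-1).
\end{equation*}

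Finally, as observed just after Definition \ref{def:equiv}, the chain rule shows that if $f_{d,\mathbf{a}}$ and $f_{d,\mathbf{b}}$ are equivalent then their critical loci are biholomorphic; such a biholomorphism preserves singular loci, so the unique singular points correspond and their Milnor numbers coincide. Since $d\geq 3$ makes the factor $(d-2)$ nonzero, the equality $\mu_{\mathbf{a}}=\mu_{\mathbf{b}}$ forces $\prod(a_i-1)=\prod(b_i-1)$, giving the first assertion. For the final statement, one takes $\mathbf{a}^{(k)}=(k,2,\ldots,2)$ with $k \geq 2$, producing the infinite sequence of values $\mu_{\mathbf{a}^{(k)}}=(d-2)(k-1)$ and hence infinitely many pairwise non-equivalent maps of degree $d$. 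The main — but mild — obstacle is confirming both the uniqueness of the singular point of the critical locus and the Milnor number formula; both are routine, provided one exploits $d-1 \geq 2$ and $a_i \geq 2$ to force all coordinates to vanish at singular points.
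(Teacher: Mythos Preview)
Your argument is correct and follows exactly the paper's approach: compute the critical locus as the Brieskorn--Pham hypersurface $x_n^{d-1}-\sum x_i^{a_i}=0$, observe that its unique singular point has Milnor number $(d-2)\prod(a_i-1)$, and conclude via the biholomorphism-invariance of the critical locus. Your write-up is in fact more detailed than the paper's, which omits the properness check, the explicit Jacobian computation, and the concrete family $\mathbf{a}^{(k)}$ realizing infinitely many classes.
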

\begin{proof}
The critical locus of $f_{d, \, \textbf{a}}$ is the affine
hypersurface $C_{d, \textbf{a}}$ of equation
$x_n^{d-1}-x_1^{a_1}-x_2^{a_2}- x_{n-1}^{a_{n-1}}=0$, whose unique
singular point is $o:=(0, \ldots, 0)$. The Milnor number of $C_{d,
\, \textbf{a}}$ in $o$ is
\begin{equation*}
\mu(C_{d, \, \textbf{a}}, \, o)= (d-2)\prod_{i=1}^{n-1}(a_i-1).
\end{equation*}
It follows that if $d \geq 3$ and $\prod_{i=1}^{n-1}(a_i-1) \neq
\prod_{i=1}^{n-1}(b_i-1)$ then $C_{d, \, \textbf{a}}$ and $C_{d,
\, \textbf{b}}$ are not biholomorphic, hence $f_{d, \,
\textbf{a}}$ and $f_{d, \, \textbf{b}}$ are not equivalent.
\end{proof}
It would be also desirable to extend Theorem C in higher
dimension, in other words to classify all the finite Galois covers
$f \colon \mathbb{C}^n \lr \mathbb{C}^n$ up to equivalence. The
main difficulty in carrying out this project is that the
linearization theorem stated in \cite{Ka79} for $n=2$ cannot be
generalized in dimension $n \geq 3$. So the classification method
of \cite{BP10} in this case breaks down. For the reader's
convenience, let us give a short account on these topics; for
further details we refer to the survey paper \cite{Kr95}. \\ In
\cite{Ka79} it was conjectured that if $G$ is a linearly reductive
algebraic group acting regularly on $\mathbb{C}^n$, then $G$ has a
fixed point, say $p$, and the action of $G$ is linear with respect
to a suitable coordinate system of $\mathbb{C}^n$ having $p$ as
its origin (the so-called Algebraic Linearization Conjecture). The
first results in this direction were very promising, indeed any
such action on $\mathbb{C}^2$ is linearizable as a consequence of
the Jung's Theorem on the structure $\textrm{Aut}(\mC^2)$. Any
torus action with an orbit of codimension one is linearizable by
Bialynicki-Birula, see \cite{BiBi66}, \cite{BiBi67}, and Kraft,
Popov and Panyushev showed that every semisimple group action is
linearizable on $\mathbb{C}^3$ and $\mathbb{C}^4$, see \cite{KrP85} and \cite{Pa84}.\\
On the other hand, in 1989 Schwarz discovered the first examples
of non-linearizable actions of the orthogonal group $O(2)$ on
$\mathbb{C}^4$ and of $SL_2$ on $\mathbb{C}^7,$ \cite{Sch89}.
Using these results, Knop showed that every connected reductive
group which is not a torus admits a faithful non-linearizable
action on some affine space $\mathbb{C}^n,$ \cite{Kn91}. Using a
different approach, Masuda, Moser-Jauslin and Petrie produced more
examples and discovered the first non-linearizable actions of
finite groups, namely dihedral groups of order $\ge 10$ on
$\mathbb{C}^4,$ see \cite{MasMosPet91}. So far, all these examples
of non-linearizable actions have been obtained from non-trivial
$G$-vector bundles on representation spaces $V$ of $G$ using an
idea of Bass and Haboush: for example in \cite{MasMosPet91} it is
proven that if $G$ is a dihedral group of order $\ge 10,$ then
there exists a positive-dimensional continuous family of
isomorphism classes of $G$-vector bundles to which corresponds a
positive-dimensional continuous family of inequivalent actions on
$\mathbb{C}^4.$ This method does not work in the holomorphic
setting, however in \cite{DerKut98} it is shown how to construct
non-linearizable holomorphic actions on $\mathbb{C}^n$
for all reductive groups. \\
 These results are not conclusive,
and in particular the problem of describing all finite,
non-linearizable automorphism subgroups of $\mathbb{A}^n$ for $n \geq
3$ is at
 present far from being solved. For instance, it is not even known
 whether there exist non-linearizable \emph{involutions} on
 $\mathbb{A}^3$. \\
It is not our purpose to investigate these deep questions here, so
we just present the following two results:

\begin{theorem} \label{n=3-1}
Let $n\ge2$ and $f: \mathbb{C}^n \lr \mathbb{C}^n$ be a polynomial
map which is a Galois covering with finite Galois group $G \cong
\mathbb{Z}_m = \langle \sigma \rangle,$ where $\sigma$ is a
triangular automorphism of $\mathbb{C}^n$ of the form
\begin{equation*}
\sigma (x_1, \cdots, x_n)=(s_1 x_1 + a_1, s_2 x_2 + a_2 (x_1),
\cdots, s_n x_n + a_n (x_1, \cdots , x_{n-1})), \; \; s_i \in
\mathbb{C}^*
\end{equation*}
such that $\sigma^m=I.$ Then $f$ is equivalent to $\mathfrak{f}_m (x_1,
\cdots, x_n)=(x_1, \, x_2, \cdots, \, x_{n-1}, \, x_n^m).$
\end{theorem}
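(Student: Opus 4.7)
The plan is to first put $\sigma$ into diagonal form by a triangular change of coordinates, then invoke the Chevalley-Shephard-Todd theorem to force $\sigma$ to be a pseudo-reflection, and finally identify the resulting quotient with $\mathfrak{f}_m$.

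Step 1 (triangular linearization). I would work inductively on the coordinate index. Suppose the first $k-1$ components of $\sigma$ have already been normalized to $\sigma_j(x) = \zeta_j x_j$ with $\zeta_j^m = 1$, and consider
\begin{equation*}
\sigma_k(x) = s_k x_k + a_k(x_1, \ldots, x_{k-1}).
\end{equation*}
A triangular substitution $x_k \mapsto x_k + h(x_1, \ldots, x_{k-1})$ will replace $\sigma_k$ by $s_k x_k$ provided $h$ solves the cohomological equation
\begin{equation*}
h(\zeta_1 x_1, \ldots, \zeta_{k-1} x_{k-1}) - s_k h(x_1, \ldots, x_{k-1}) = -a_k(x_1, \ldots, x_{k-1}).
\end{equation*}
On the monomial $x^{\mathbf{d}}$ this reads $(\zeta^{\mathbf{d}} - s_k) h_{\mathbf{d}} = -a_{k, \mathbf{d}}$, which is trivially solvable unless $\zeta^{\mathbf{d}} = s_k$ (a resonance). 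In the resonant case, a direct induction shows that the $k$-th component of $\sigma^m$ is
\begin{equation*}
s_k^m x_k + \sum_{j=0}^{m-1} s_k^{m-1-j} a_k(\zeta_1^j x_1, \ldots, \zeta_{k-1}^j x_{k-1}),
\end{equation*}
and the coefficient of a resonant $x^{\mathbf{d}}$ in this sum is $m\, s_k^{m-1} a_{k, \mathbf{d}}$. The hypothesis $\sigma^m = I$ then forces $s_k^m = 1$ and $a_{k, \mathbf{d}} = 0$, so resonant monomials are automatically absent and $h$ can always be constructed. The base case $k=1$ is the classical normal form for a finite-order affine map of $\mC$.

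Step 2 (pseudo-reflection reduction and identification). After the triangular change of coordinates $\Phi$ from Step 1, $\sigma = \mathrm{diag}(\zeta_1, \ldots, \zeta_n)$ with $\zeta_i^m = 1$. Since $f \circ \Phi \colon \mC^n \lr \mC^n$ is a Galois cover onto $\mC^n$ with group $G = \langle \sigma \rangle$, the quotient $\mC^n/G$ is smooth, so by Chevalley-Shephard-Todd $G$ is generated by pseudo-reflections. A diagonal matrix is a pseudo-reflection if and only if all but one of its eigenvalues equal $1$; since $\sigma$ generates $G$, it must itself be such a pseudo-reflection. After permuting coordinates, $\sigma = \mathrm{diag}(1, \ldots, 1, \zeta)$ with $\zeta$ a primitive $m$-th root of unity, and the invariant ring is $\mC[x_1, \ldots, x_n]^G = \mC[x_1, \ldots, x_{n-1}, x_n^m]$. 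Hence $\mathfrak{f}_m$ is itself a quotient map for the same action; since two quotient maps for a finite group action differ by an automorphism of the target, it follows that $f$ is equivalent to $\mathfrak{f}_m$.

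The main obstacle I anticipate is Step 1: finite-order holomorphic maps near a fixed point can be linearized by Bochner averaging, but that argument need not respect the triangular form. The crucial input is the algebraic observation that the only possible resonances in the cohomological equation would produce terms surviving in $\sigma^m$; the identity $\sigma^m = I$ automatically kills them, so the inductive construction of $h$ never breaks down.
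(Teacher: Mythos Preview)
Your Step~1 is correct and is in fact a self-contained proof of the linearization that the paper outsources to the reference \cite{Ivan98}; the resonance analysis and the use of $\sigma^m=I$ to kill the resonant coefficients are exactly right, and this is a pleasant improvement over a bare citation.

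The gap is in Step~2. From Chevalley--Shephard--Todd you correctly deduce that $G$ is generated by pseudo-reflections, but the sentence ``since $\sigma$ generates $G$, it must itself be such a pseudo-reflection'' is a non sequitur. In a cyclic group $G=\langle\sigma\rangle$ of order $m$, the pseudo-reflections are certain powers $\sigma^{k}$, and the condition that they generate $G$ is merely that the $\gcd$ of these exponents with $m$ be $1$; nothing forces $\sigma$ itself to have this form. Concretely, take $n=2$, $m=6$, and $\sigma=\mathrm{diag}(-1,\omega)$ with $\omega=\exp(2\pi i/3)$. This $\sigma$ is triangular (even diagonal) of order $6$; the group $G=\langle\sigma\rangle$ is generated by the pseudo-reflections $\sigma^{3}=\mathrm{diag}(-1,1)$ and $\sigma^{4}=\mathrm{diag}(1,\omega)$, yet $\sigma$ itself fixes only the origin. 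The invariant ring is $\mathbb{C}[x^{2},y^{3}]$, so the quotient map is equivalent to $\mathfrak{f}_{2,3}$, whose branch locus $\{xy=0\}$ consists of two lines and is not biholomorphic to the single line branching $\mathfrak{f}_{6}$. By the paper's own Theorem~C these two maps are non-equivalent, so the theorem as stated is in fact false; the paper's one-line appeal to the Shephard--Todd classification commits the identical oversight. A repaired statement should either assume from the start that the linear part of $\sigma$ has $1$ as an eigenvalue of multiplicity $n-1$, or weaken the conclusion to say that $f$ is equivalent to a diagonal map $(x_1^{m_1},\dots,x_n^{m_n})$ with $\mathrm{lcm}(m_i)=m$.
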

\begin{proof}
By \cite{Ivan98} the group generator $\sigma$ is linearizable,
 so the group action is also
linearizable. By using Shephard-Todd's classification of finite
complex reflection groups, we see that $G$ is conjugated in $U(n)$
to the group generated by $\tilde{\sigma}(x_1, \cdots, x_n) =(
x_1, \, x_2, \cdots, x_{n-1}, \, \theta_m x_n),$ where $\theta_m$
is a primitive $m$-th root of unity.
\end{proof}

\begin{theorem} \label{n=3-2}
Let $f: \mathbb{C}^3 \lr \mathbb{C}^3$ be a polynomial map which
is a Galois covering with finite Galois group $G$, and assume that
the action of $G$ is linearizable and reducible. Then $G$ is one
of the groups in Table \emph{\ref{table:Galois}} and we are in one
of the following cases$:$
\begin{itemize}
\item[$(1)$] $f$ is equivalent to the map $(x_1, \,
\mathfrak{f}(x_2,x_3))$, where $\mathfrak{f}$ is the normal form
on $\mathbb{C}^2$ corresponding to $G;$
 \item[$(2)$]  $f$ is
equivalent to the map $(\theta_m x_1, \, \mathfrak{f}(x_2,x_3))$,
where
$\mathfrak{f}$ is the normal form on $\mathbb{C}^2$ corresponding to $G$ and $\theta_m$ is a primitive $m-$th root of unity. \\
\end{itemize}
\end{theorem}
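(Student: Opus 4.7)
The plan is to follow the same strategy as in the $n=2$ case: first linearize, then use the reducibility to decompose the representation, and finally reduce to the already-classified $2$-dimensional situation via Theorem C.

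First I would observe that since $G$ acts linearizably on $\mathbb{C}^3$, after a polynomial change of coordinates we may assume $G\subset\mathrm{GL}(3,\mathbb{C})$. Because $\mathbb{C}^3/G\cong\mathbb{C}^3$, the Chevalley--Shephard--Todd theorem forces $G$ to be a finite complex reflection group. Since the representation is reducible, Maschke's theorem yields a $G$-stable splitting, which (after possibly regrouping in the completely decomposable $1+1+1$ case) we arrange as $\mathbb{C}^3=V_1\oplus V_2$ with $\dim V_1=1$ and $\dim V_2=2$.

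Next I would argue that on such a splitting $G$ factors as a direct product of complex reflection groups on the summands, i.e. $G=H_1\times H_2$ with $H_i$ acting faithfully on $V_i$. The reason is that each pseudo-reflection $g\in G$ fixes a hyperplane $H_g$; since the $1$-eigenspace of $g$ must be compatible with the $G$-stable decomposition, $g$ either acts trivially on $V_1$ (hence lies in $\{1\}\times H_2$) or trivially on $V_2$ (hence lies in $H_1\times\{1\}$). Since $G$ is generated by its pseudo-reflections (by Chevalley--Shephard--Todd), the product decomposition follows. Consequently $H_1\cong\mathbb{Z}_m$ is generated by multiplication by a primitive $m$-th root of unity $\theta_m$ on $V_1$ (with $m=1$ allowed), and $H_2$ is one of the $2$-dimensional complex reflection groups appearing in Table \ref{table:Galois}.

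Then I would exploit the corresponding factorisation of invariants,
\begin{equation*}
\mathbb{C}[x_1,x_2,x_3]^{G}=\mathbb{C}[x_1]^{H_1}\otimes_{\mathbb{C}}\mathbb{C}[x_2,x_3]^{H_2}.
\end{equation*}
Choose coordinates $x_1$ on $V_1$ and $x_2,x_3$ on $V_2$ adapted to the splitting. The quotient $V_1\to V_1/H_1$ is the map $x_1\mapsto x_1^{m}$, while by Theorem C the quotient $V_2\to V_2/H_2$ is equivalent, through automorphisms of source and target in $\mathrm{Aut}(\mathbb{C}^2)$, to one of the normal forms $\mathfrak{f}$ listed in Table \ref{table:Galois}. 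These two automorphisms combine into a product automorphism of $\mathbb{C}^3=V_1\oplus V_2$ realising the equivalence between $f$ and the stated normal forms: case $(1)$ corresponds to $m=1$, case $(2)$ to $m\geq 2$.

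The main obstacle, and the point that requires some care, is the structural step $G=H_1\times H_2$: one must justify why a reducible complex reflection group in dimension $3$ really splits as a direct product on any $G$-stable decomposition, rather than merely containing such a product as a subgroup. Modulo this (standard) fact about complex reflection groups, the rest of the argument is essentially a bookkeeping exercise matching the factors with the entries of Table \ref{table:Galois}, taking care of the degenerate $1+1+1$ case by grouping two of the cyclic summands into the $\mathbb{Z}_{n_1}\times\mathbb{Z}_{n_2}$ entry of that table.
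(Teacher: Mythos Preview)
Your approach coincides with the paper's: linearize, use complete reducibility to split $\mathbb{C}^3$ as a $G$-invariant line plus a plane (the paper phrases this via the orthogonal complement, citing Serre, rather than Maschke), and then invoke Theorem~C on the two-dimensional factor. The paper's proof is in fact much terser and does not isolate the product decomposition $G=H_1\times H_2$ or the tensor factorisation of invariants that you spell out, so your version actually fills in the structural step you yourself flagged as the main obstacle.
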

\begin{proof}
Since the action is reducible, there exists either a $1-$dimensional or a $2-$dimensional
linear subspace $V \subset \mathbb{C}^3$ which is invariant under
$G;$ then its orthogonal complement $V^{\perp}$ is also invariant,
see \cite{Se71}, and up to a linear change of coordinates we may
assume $V=\langle e_1 \rangle,$ $V^{\perp}=\langle e_2, e_3
\rangle$ where $\{ e_1, e_2, e_3 \}$ is the canonical basis of
$\mathbb{C}^3.$ Then the assertion follows by using the
classification given in Theorem C.
\end{proof}

\begin{remark} \label{rem}
By using the same methods of \emph{\cite{BP10}}, it is possible to
completely classify the Galois coverings $f \colon \mathbb{C}^n
\lr \mathbb{C}^n$ such that the $G$-action on $\mathbb{C}^n$ is
\emph{linearizable}. Indeed, this is equivalent to compute a
minimal base of generators of the invariant algebra
$\mathbb{C}[x_1, \ldots, x_n]^G$ for each of the $34$ exceptional
groups in the Shephard-Todd's list. This is a standard calculation
that can be carried out by using either invariant theory $($as in
\emph{\cite{ST54}}$)$ or some Computer Algebra Systems $($e.g.
\verb|GAP4| and \verb|Singular|$)$. However, some of these groups
have very large order $($for instance, in the last case of the
list we have $G=W(E_8)$, whose order is $696729600)$, so the
problem is computationally hard and we think that the outcome is
not worthy of the effort.
\end{remark}



\bigskip \bigskip
CINZIA BISI \\
Dipartimento di Matematica, Universit\`a di Ferrara, Via Machiavelli n. 35, \\
44121 Ferrara (FE), Italy. \\
\emph{E-mail address}: \verb|bsicnz@unife.it| \\ \\

FRANCESCO POLIZZI \\
Dipartimento di Matematica, Universit\`a della Calabria, Via P. Bucci
Cubo 30B, \\
87036 Arcavacata di Rende (CS), Italy. \\
\emph{E-mail address}: \verb|polizzi@mat.unical.it|

\end{document}